\begin{document}

\newtheorem{theorem}{Theorem}    
\newtheorem{proposition}[theorem]{Proposition}
\newtheorem{conjecture}[theorem]{Conjecture}
\def\theconjecture{\unskip}
\newtheorem{corollary}[theorem]{Corollary}
\newtheorem{lemma}[theorem]{Lemma}
\newtheorem{sublemma}[theorem]{Sublemma}
\newtheorem{observation}[theorem]{Observation}
\theoremstyle{definition}
\newtheorem{definition}{Definition}
\newtheorem{notation}[definition]{Notation}
\newtheorem{remark}[definition]{Remark}
\newtheorem{question}[definition]{Question}
\newtheorem{example}[definition]{Example}
\newtheorem{problem}[definition]{Problem}
\newtheorem{exercise}[definition]{Exercise}

\numberwithin{theorem}{section}
\numberwithin{definition}{section}
\numberwithin{equation}{section}

\def\earrow{{\mathbf e}}
\def\rarrow{{\mathbf r}}
\def\uarrow{{\mathbf u}}
\def\tpar{T_{\rm par}}
\def\apar{A_{\rm par}}

\def\reals{{\mathbb R}}
\def\torus{{\mathbb T}}
\def\heis{{\mathbb H}}
\def\integers{{\mathbb Z}}
\def\naturals{{\mathbb N}}
\def\complex{{\mathbb C}\/}
\def\distance{\operatorname{distance}\,}
\def\support{\operatorname{support}\,}
\def\dist{\operatorname{dist}\,}
\def\Span{\operatorname{span}\,}
\def\degree{\operatorname{degree}\,}
\def\kernel{\operatorname{kernel}\,}
\def\dim{\operatorname{dim}\,}
\def\codim{\operatorname{codim}}
\def\trace{\operatorname{trace\,}}
\def\Span{\operatorname{span}\,}
\def\ZZ{ {\mathbb Z} }
\def\p{\partial}
\def\rp{{ ^{-1} }}
\def\Re{\operatorname{Re\,} }
\def\Im{\operatorname{Im\,} }
\def\ov{\overline}
\def\eps{\varepsilon}
\def\lt{L^2}
\def\diver{\operatorname{div}}
\def\curl{\operatorname{curl}}
\def\etta{\eta}
\newcommand{\norm}[1]{ \|  #1 \|}
\def\Span{\operatorname{span}}
\def\expect{\mathbb E}

\newcommand{\Norm}[1]{ \left\|  #1 \right\| }
\newcommand{\set}[1]{ \left\{ #1 \right\} }
\def\one{\mathbf 1}
\newcommand{\modulo}[2]{[#1]_{#2}}

\def\scriptf{{\mathcal F}}
\def\scriptg{{\mathcal G}}
\def\scriptm{{\mathcal M}}
\def\scriptb{{\mathcal B}}
\def\scriptc{{\mathcal C}}
\def\scriptt{{\mathcal T}}
\def\scripti{{\mathcal I}}
\def\scripte{{\mathcal E}}
\def\scriptv{{\mathcal V}}
\def\scriptS{{\mathcal S}}
\def\frakv{{\mathfrak V}}

\author{Michael Christ}
\address{
        Michael Christ\\
        Department of Mathematics\\
        University of California \\
        Berkeley, CA 94720-3840, USA}
\email{mchrist@math.berkeley.edu}
\thanks{The author was supported in part by NSF grant DMS-0401260}

\date{May 28, 2009}

\title[A Weak Type $(1,1)$ Inequality]
{A Weak Type $(1,1)$ Inequality \\ For Maximal Averages   Over Certain Sparse Sequences}

\maketitle


\section{Introduction}

To any strictly increasing sequence of nonnegative 
integers $(n_\nu: \nu\in\naturals)$
is associated a maximal operator
$M$, which maps functions $f\in\ell^1(\integers)$
to $Mf:\integers\to[0,+\infty]$, defined by
\begin{equation}
Mf(x) = \sup_N N^{-1}\Big|\sum_{\nu=1}^N f(x+n_\nu)\Big|.
\end{equation}
The most fundamental example is the Hardy-Littlewood maximal
function for $\integers$, for which $n_\nu\equiv \nu$.
In this note we construct sequences which satisfy
\begin{equation}
n_\nu\asymp \nu^m
\end{equation}
for arbitrary integer exponents $m\ge 2$,
and which have a certain algebraic character,
for which the associated maximal operator is of weak type $(1,1)$.

\medskip

\begin{notation}
For any positive integer $N$ and any $n\in\integers$,
$\modulo{n}{N}$ denotes the unique element
of $\{0,1,\cdots,N-1\}$ congruent to $n$ modulo $N$.
For $n=(n_1,\cdots,n_d)\in\integers^d$,
$\modulo{n}{N} = (\modulo{n_1}{N},\cdots,\modulo{n_d}{N})$. 

If $n_\nu,c_\nu$ are sequences of positive integers which tend to
infinity, we write $n_\nu\asymp c_\nu$ to indicate that
the ratio $\frac{n_\nu}{c_\nu}$ is bounded above and below by strictly
positive finite constants, independent of $\nu\in\naturals$.
\end{notation}

Let $(p_k)$ be a lacunary sequence of primes which satisfies
\begin{equation}\label{primehypotheses}
\left.
\begin{aligned}p_{k+1}&\ge (1+\delta)p_k 
\\
p_{k+1}&\le Cp_k
\end{aligned}
\right\}
\text{ for all } k
\end{equation}
for some $\delta>0$ and $C<\infty$.
Let $m\ge 2$ be a positive integer.
Let $(a_k)$ be an auxiliary sequence of positive integers satisfying 
\begin{align}
\label{secondajcondition}
a_k &\le Cp_k^m
\\
a_{k+1}&> a_k + p_k^m.
\label{firstajcondition}
\end{align}
Define
$\scriptS_k\subset\integers$ to be
\begin{equation}
\scriptS_k=\big\{
a_k + \sum_{r=1}^m p_k^{r-1} \modulo{j^r}{p_k}:
0\le j<p_k\big\}.
\end{equation}
Thus $\scriptS_k$ has cardinality
\begin{equation}
|\scriptS_k| = p_k
\end{equation}
and
$\scriptS_k\subset \big[a_k, a_k + p_k^m\big]$.
Let $\scriptS=\cup_{k=1}^\infty \scriptS_k$,
and define the sequence $(n_\nu: \nu\in\naturals)$ 
to be the elements of $\scriptS$, listed in increasing order.
The condition \eqref{firstajcondition} ensures that
every element of $\scriptS_{k+1}$ is strictly
greater than every element of $\scriptS_k$.

\begin{theorem}
\label{thm:main}
Let $m\ge 2$ be an integer.
Let $(n_\nu: \nu\in\naturals)$ be the subsequence of $\naturals$
constructed via the above recipe from a lacunary sequence of primes $p_k$
and a sequence of positive integers $a_k$
satisfying \eqref{primehypotheses}, 
\eqref{firstajcondition}, and \eqref{secondajcondition}.
Then $n_\nu\asymp\nu^m$, and the maximal function associated to $(n_\nu)$ is of weak type $(1,1)$
on $\integers$.
\end{theorem}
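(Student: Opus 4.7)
The asymptotic $n_\nu\asymp\nu^m$ follows at once from $|\scriptS_k|=p_k$, the lacunarity identity $\sum_{j\le k}p_j\asymp p_k$, and the bracketing $\scriptS_k\subset[a_k,a_k+p_k^m]\subset[p_k^m,(C+1)p_k^m]$: for $\nu\in(|\scriptS_1\cup\cdots\cup\scriptS_{k-1}|,\,|\scriptS_1\cup\cdots\cup\scriptS_k|]$ one has $\nu\asymp p_k$ and $n_\nu\asymp p_k^m\asymp\nu^m$. Setting
\[
A_k f(x)=p_k^{-1}\sum_{n\in\scriptS_k}f(x+n),
\]
the same range of $N$ satisfies $N\asymp p_k$, and the lacunarity of $(p_j)$ gives
\[
\frac{1}{N}\sum_{\nu=1}^N|f(x+n_\nu)|\le \frac{1}{N}\sum_{j\le k}p_j\, A_j|f|(x)\le C\sup_{j\le k}A_j|f|(x),
\]
so it suffices to prove weak type $(1,1)$ for $M^* f=\sup_k A_k|f|$.

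The key analytic input is a Fourier-theoretic estimate for $\sigma_k = p_k^{-1}\sum_{n\in \scriptS_k}\delta_n$, with $A_k f=\sigma_k*f$. The set $\scriptS_k-a_k$ is the image of the moment curve $j\mapsto([j]_{p_k},\ldots,[j^m]_{p_k})$ in $(\integers/p_k)^m$ under base-$p_k$ digit packing. A change of variables reduces the computation of $\hat\sigma_k(\xi)$ at a generic frequency to a one-variable polynomial exponential sum of the form $p_k^{-1}\sum_{j=0}^{p_k-1} e^{2\pi i P(j)/p_k}$ with $P\in\mathbb F_{p_k}[j]$ of degree at most $m$; the Weil bound then gives $|\hat\sigma_k(\xi)|\le Cp_k^{-1/2}$ at every $\xi$ outside an exceptional set of measure $\lesssim p_k^{-m}$ about the integers. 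I therefore decompose $\sigma_k=\mu_k+\rho_k$ with $\hat\mu_k$ supported on these ``major arcs'' and $\|\hat\rho_k\|_\infty\le Cp_k^{-1/2}$. Inverse Fourier analysis identifies $\mu_k*f(x)$ with an average of $f$ (against a smooth weight) over an interval of length $\asymp p_k^m$ translated by $a_k$; since $a_k\asymp p_k^m$, enlarging by a constant factor yields $\sup_k|\mu_k*f(x)|\le C\,M_{\mathrm{HL}}|f|(x)$, which is of weak type $(1,1)$. The minor-arc piece satisfies the $L^2$ square-function estimate
\[
\Big\|\sup_k|\rho_k*f|\Big\|_2^2\le \sum_k\|\hat\rho_k\|_\infty^2\|f\|_2^2\le C\Big(\sum_k p_k^{-1}\Big)\|f\|_2^2\le C'\|f\|_2^2,
\]
convergent by the lacunarity hypothesis \eqref{primehypotheses}.

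The main obstacle is to promote this $L^2$ estimate for the minor-arc maximal operator $f\mapsto\sup_k|\rho_k*f|$ to weak type $(1,1)$. I would apply the Calder\'on--Zygmund decomposition $f=g+\sum_i b_i$ at height $\alpha$ and handle the good part through the $L^2$ bound (using $\|g\|_2^2\le C\alpha\|f\|_1$). The bad part demands
\[
\Big|\big\{x\notin\bigcup_i Q_i^*:\,\sup_k|\rho_k*b(x)|>\alpha\big\}\Big|\le C\alpha^{-1}\|f\|_1,
\]
which requires matching each atom $b_i$ (with cube $Q_i$ of dyadic length $L_i$) to the range of scales $k$ for which $p_k^m\gtrsim L_i$, exploiting both the mean-zero property of $b_i$ and the smoothness of $\hat\rho_k$ at scale $p_k^{-m}$ to gain a Sobolev-type factor in $\|\rho_k*b_i\|_2$, then orthogonally combining contributions across $k$ via the lacunarity of $(p_k)$. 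This delicate matching of atom scales to Fourier scales, complicated by the fact that $\rho_k$ has no pointwise smoothness in physical space, is where the bulk of the work lies.
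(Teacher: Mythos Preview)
Your architecture—split $\sigma_k$ into a Hardy--Littlewood-dominated main term and an error $\rho_k$ with $\|\hat\rho_k\|_\infty\le Cp_k^{-1/2}$ via Weil—matches the paper. The gap is in the final step: the Calder\'on--Zygmund scheme you sketch for $\sup_k|\rho_k*b|$ does not close with the ingredients you list. For an atom $b_i$ on an interval of length $L_i$, mean zero gives only $|\hat b_i(\xi)|\lesssim\min(1,L_i|\xi|)\|b_i\|_1$, while the only bound on $\hat\rho_k$ is the flat one $|\hat\rho_k|\le Cp_k^{-1/2}$; combining these yields $\|\rho_k*b_i\|_2^2\lesssim p_k^{-1}\|b_i\|_1^2$ with \emph{no} factor of $(L_i/p_k^m)^\varepsilon$, because the high-frequency part of $\hat b_i$ still carries full mass and $\hat\rho_k$ does not decay there. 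The ``smoothness of $\hat\rho_k$ at scale $p_k^{-m}$'' is just the derivative bound $|\partial_\xi\hat\rho_k|\lesssim p_k^m$, which buys nothing at high frequency. Nor can you fall back on $\|b\|_2$: discrete CZ atoms are not bounded (the stopping procedure bottoms out at singletons), so $\|b\|_2^2$ is not controlled by $\alpha\|f\|_1$, and Chebyshev leaves you with $\alpha^{-2}\|b\|_2^2$ rather than $\alpha^{-1}\|f\|_1$. There is no H\"ormander condition in physical space to exploit either, since $\rho_k$ lives on a sparse set with no local regularity.

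The paper replaces CZ with a different mechanism. Decompose $f=\sum_j f_j$ by \emph{level sets}, $f_j=f\cdot\one_{|f|\in[2^j,2^{j+1})}$, so that $\|f_j\|_2^2\asymp 2^j\|f_j\|_1$ automatically. For pairs $(j,k)$ with $p_k<\alpha^{-1}2^j$ one excises the set $\support(f_j)+\support(\mu_k)$, using the purely discrete inequality $|A+B|\le|A|\,|B|$ to bound its size by $|\support(f_j)|\cdot p_k\lesssim 2^{-j}\|f_j\|_1\cdot p_k$; summing gives an exceptional set of measure $O(\alpha^{-1}\|f\|_1)$. Off this set only pairs with $p_k\ge\alpha^{-1}2^j$ remain, and there the $\ell^2$ decay $p_k^{-1/2}$ combines with $\|f_j\|_2^2\lesssim 2^j\|f_j\|_1$ to give a bound that sums geometrically in the parameter $s$ defined by $2^j\sim 2^{-s}p_k\alpha$. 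This level-set/sumset argument (going back to Fefferman; see \cite{christrough}, \cite{uz}) is exactly what substitutes for CZ in the borderline regime $|\support(\mu_k)|\asymp\|\hat\rho_k\|_\infty^{-2}$, and it is the missing idea in your proposal. (A secondary point: your direct appeal to Weil for $\hat\sigma_k(\xi)$ at real $\xi$ glosses over the fact that $\sum_j e^{2\pi i\xi\sum_r p_k^{r-1}[j^r]_{p_k}}$ is not literally a polynomial character sum over $\mathbb F_{p_k}$ once the residues $[j^r]_{p_k}$ are inserted; the paper's \S3.3--3.4 handle this transference carefully via $\integers_{3p}^m$ and cutoff functions.)
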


These sequences are closely related to examples given by Rudin
\cite{rudin} of $\Lambda(p)$ sets for $p=4,6,8,\cdots$.

Bourgain \cite{bourgain1},\cite{bourgain2},\cite{bourgain3} proved that
the maximal operator associated to the sequence $a_\nu = \nu^m$
is bounded on $\ell^q(\integers)$ for all $q>1$, for arbitrary $m\in\naturals$,
but the situation for the endpoint $q=1$ was left unresolved.
There have recently been several works concerned with weak type $(1,1)$ inequalities
for maximal operators associated to sparse subsequences of integers.
Buczolich and Mauldin \cite{bm1},\cite{bm2}
have shown that the maximal operator associated
to the sequence of all squares $(\nu^2:\nu\in\integers)$
is not of weak type $(1,1)$. 
LaVictore \cite{patrickbm}
has extended their method to show that
the same holds for $(n^m: n\in\integers)$
for all positive integer exponents $m$.
In the positive direction, Urban and Zienkiewicz \cite{uz}
have shown that for any real exponent $\alpha>1$ 
sufficiently close to $1$,
the maximal function associated to the sequence $\lfloor{\nu^\alpha}\rfloor$
is of weak type $(1,1)$. 
LaVictoire \cite{patrick1} has shown that certain random sequences
satisfying $n_\nu\asymp\nu^m$ 
almost surely give rise to maximal operators which are of
weak type $(1,1)$ for arbitrary real exponents $m\in (1,2)$;
it remains an open question whether the conclusion holds for these random sequences
when $m\in[2,\infty)$.

Let $\sigma$ denote
surface measure on the unit sphere $S^{d-1}\subset\reals^d$.
It remains an open question whether
the maximal operator $Mf(x) = \sup_{k\in\integers}
\int_{S^{d-1}}|f(x-2^k y)|\,d\sigma(y)$ is of weak type $(1,1)$
in $\reals^d$ for $d\ge 2$. Discrete analogues of continuum problems
are often more delicate, but
our analysis will exploit two discrete phenomena which lack
obvious continuum analogues.

$f\mapsto Mf$ is of weak type $(1,1)$ if and only if
the same goes for $f\mapsto M(|f|)$, so it suffices to restrict
attention to nonnegative functions.
For any strictly increasing sequence $(n_\nu)$,
\[c\sup_k |f*\mu_k| \le Mf\le C\sup_k |f*\mu_k|\]
for all nonnegative functions $f$,
where $\mu_k$ is the measure
$\mu_k = 2^{-k} \sum_{\nu=2^k+1}^{2^{k+1}} \delta_{n_\nu}$,
and $\delta_n$ denotes the Dirac mass at $n$.
Therefore $f\mapsto Mf$ is of weak type $(1,1)$, if and only
if the same goes for $f\mapsto \sup_k |f*\mu_k|$.

The following is a sufficient condition, of Tauberian type, for such a maximal
operator to be of weak type $(1,1)$.
Although our application will be to operators on $\ell^1(\integers)$,
this result makes sense for any discrete group and is no more complicated
to prove in that setting, so we give the general formulation.
\begin{theorem}
\label{thm:difference}
Let $G$ be a discrete group.
Let $\gamma>0$.
Let $\mu_k,\nu_k:G\to\complex$ satisfy
\begin{equation}
\label{hyp:numaxfn}
\text{The maximal operator $\sup_k |f|*|\nu_k|$ is of weak type $(1,1)$ on $G$,}
\end{equation}
each $\mu_k$ satisfies
\begin{equation}
|\support(\mu_k)|\le C2^{k\gamma},
\end{equation}
and
\begin{equation} \label{elltwohypothesis}
\norm{f*(\mu_k-\nu_k)}_{\ell^2} 
\le C2^{-k\gamma/2}\norm{f}_{\ell^2}
\ \ \ \forall f\in \ell^2(\integers^d).
\end{equation}
Then the maximal operator $\sup_{k\in\naturals} |f*\mu_k|$ is of weak type $(1,1)$ on $G$.
\end{theorem}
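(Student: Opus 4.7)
The plan is a Calderón–Zygmund decomposition in which the $\ell^2$ hypothesis \eqref{elltwohypothesis} controls a good part, and the weak-type hypothesis \eqref{hyp:numaxfn}, together with the support bound on $\mu_k$, controls a bad part. Set $\omega_k=\mu_k-\nu_k$ and $Tf=\sup_k|f*\omega_k|$. Summing squares, \eqref{elltwohypothesis} gives the $\ell^2$ estimate
\[
\|Tf\|_{\ell^2}^2\le\sum_k\|f*\omega_k\|_{\ell^2}^2\le C^2\sum_k 2^{-k\gamma}\|f\|_{\ell^2}^2\lesssim\|f\|_{\ell^2}^2.
\]
Without loss of generality $f\ge 0$ and $\|f\|_{\ell^1}=1$; fix $\lambda>0$ and aim to show $|\{\sup_k|f*\mu_k|>\lambda\}|\lesssim 1/\lambda$.

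First I would perform a dyadic Calderón–Zygmund decomposition $f=g+\sum_I b_I$ at height $\lambda$, producing $\int b_I=0$, $\operatorname{supp}(b_I)\subset I$, $\|b_I\|_{\ell^1}\lesssim\lambda|I|$, $\sum|I|\le 1/\lambda$, $\|g\|_{\ell^\infty}\lesssim\lambda$, and $\|g\|_{\ell^2}^2\lesssim\lambda$. For the good part, the pointwise bound $|g*\mu_k|\le|g*\omega_k|+g*|\nu_k|$ splits the estimate: Chebyshev together with the $\ell^2$ bound on $T$ handles the first term, and \eqref{hyp:numaxfn} applied to $g$ handles the second, giving
\[
|\{\sup_k|g*\mu_k|>\lambda/2\}|\lesssim\frac{\|g\|_{\ell^2}^2}{\lambda^2}+\frac{\|g\|_{\ell^1}}{\lambda}\lesssim\frac{1}{\lambda}.
\]

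For the bad part $b=\sum_I b_I$, let $\tilde\Omega=\bigcup_I I^*$ be the union of dyadic dilations chosen large enough that $I+\operatorname{supp}(\mu_k)\subset I^*$ whenever $|I|\ge 2^{k\gamma}\ge|\operatorname{supp}(\mu_k)|$; then $|\tilde\Omega|\lesssim 1/\lambda$, and on $\tilde\Omega^c$ the large cubes drop out, leaving
\[
b*\mu_k(x)=\beta_k*\mu_k(x),\qquad \beta_k:=\sum_{I:\,|I|<2^{k\gamma}}b_I.
\]
Split $\beta_k*\mu_k=\beta_k*\omega_k+\beta_k*\nu_k$. The $\nu_k$-piece satisfies $|\beta_k*\nu_k|\le(\sum_I|b_I|)*|\nu_k|$, and \eqref{hyp:numaxfn} applied to $\sum_I|b_I|$ (of $\ell^1$ norm $\lesssim 1$) bounds its supremum in weak $(1,1)$. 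For the $\omega_k$-piece, the square function and telescoping give
\begin{align*}
\Bigl\|\sup_k|\beta_k*\omega_k|\Bigr\|_{\ell^2}^2
&\le\sum_k\|\beta_k*\omega_k\|_{\ell^2}^2\lesssim\sum_k 2^{-k\gamma}\|\beta_k\|_{\ell^2}^2\\
&=\sum_I\|b_I\|_{\ell^2}^2\sum_{k:\,2^{k\gamma}>|I|}2^{-k\gamma}\lesssim\sum_I\frac{\|b_I\|_{\ell^2}^2}{|I|}.
\end{align*}
The Calderón–Zygmund maximality $f(x)\le 2\lambda|I(x)|$ for $x$ in its containing cube $I(x)$ yields
\[
\sum_I\frac{\|b_I\|_{\ell^2}^2}{|I|}\le\sum_x\frac{f(x)^2}{|I(x)|}\le 2\lambda\sum_x f(x)=2\lambda,
\]
so $\|\sup_k|\beta_k*\omega_k|\|_{\ell^2}^2\lesssim\lambda$ and Chebyshev contributes $O(1/\lambda)$, closing the bad part.

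The main obstacle is the geometric step: the hypothesis bounds only the cardinality of $\operatorname{supp}(\mu_k)$, so one must verify that the dyadic dilations $I^*$ really absorb $I+\operatorname{supp}(\mu_k)$ when $|I|\ge 2^{k\gamma}$. For the application to $\integers$ each $\mu_k$ is geometrically localized (its support sits inside an interval of length $\asymp 2^{k\gamma}$), so a direct dilation works; in the general discrete group setting one instead defines $\tilde\Omega$ through the $|\nu_k|$-maximal function and uses \eqref{hyp:numaxfn} once more to absorb the discrepancy. The crux of the argument is the numerical inequality $\sum_I\|b_I\|_{\ell^2}^2/|I|\lesssim\lambda$, which is what upgrades the $\ell^2$ operator-norm decay of $\omega_k$ to the desired weak-type $(1,1)$ bound.
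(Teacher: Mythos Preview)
Your $\ell^2$ argument for the bad part is correct and rather nice --- the pointwise bound $f(x)\le 2\lambda|I(x)|$ for nonnegative $f$ on a stopping cube is exactly what makes $\sum_I\|b_I\|_2^2/|I|\lesssim\lambda$ work. The genuine gap is the geometric step, and your own diagnosis of it is accurate but your proposed fixes do not work.

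The theorem is stated for an arbitrary discrete group $G$, with no hypothesis beyond $|\operatorname{supp}(\mu_k)|\le C2^{k\gamma}$. On a general discrete group there is no Calder\'on--Zygmund stopping-time decomposition to begin with: there are no dyadic cubes, no dilations, no notion of $I^*$. Even on $\integers$, the hypothesis bounds only the \emph{cardinality} of $\operatorname{supp}(\mu_k)$, not its diameter; the support could consist of $2^{k\gamma}$ points spread over an interval of length $2^{10k\gamma}$, and then $I+\operatorname{supp}(\mu_k)$ cannot be contained in any fixed dilate of $I$. Your suggested workaround of defining $\tilde\Omega$ via the $|\nu_k|$--maximal function does not address this: the obstruction concerns the support of $\mu_k$, and nothing in the hypotheses links $\operatorname{supp}(\mu_k)$ to the level sets of the $|\nu_k|$--maximal function.

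The paper's proof avoids the difficulty by replacing the spatial Calder\'on--Zygmund decomposition with a decomposition by magnitude, $f=\sum_j f_j$ with $f_j=f\cdot\one_{\{2^j\le|f|<2^{j+1}\}}$, and by defining the exceptional set as
\[
\scripte=\bigcup_j\ \bigcup_{k:\,2^{k\gamma}<\alpha^{-1}2^j}\bigl(\operatorname{supp}(f_j)+\operatorname{supp}(\mu_k)\bigr).
\]
The size of $\scripte$ is then controlled by the purely discrete inequality $|A+B|\le|A|\cdot|B|$, valid in any group, together with $|\operatorname{supp}(f_j)|\lesssim 2^{-j}\|f_j\|_1$; this yields $|\scripte|\lesssim\alpha^{-1}\|f\|_1$ with no geometric input whatsoever. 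Off $\scripte$ only the pieces $f_j$ with $2^j\le 2^{k\gamma}\alpha$ contribute to $f*\mu_k$, and the $\ell^2$ computation (essentially your square-function estimate, reorganized by the index $s$ measuring $2^{k\gamma}\alpha/2^j$) goes through using $\|f_j\|_2^2\le 2^{j+1}\|f_j\|_1$. This is the substitute for your inequality $\sum_I\|b_I\|_2^2/|I|\lesssim\lambda$, and it plays the same role.
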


For $G=\integers^d$,
the last hypothesis can be equivalently restated as
\begin{equation}
\norm{\widehat{\mu_k}-\widehat{\nu_k}}_{L^\infty(\torus^d)}\le C2^{-k\gamma/2},
\end{equation}
where $\torus = \reals/\integers$.
In our applications, 
$\mu_k$ will be a probability measure and hence $\widehat{\mu_k}(0)$ cannot be small.
$\nu_k$ will be a simpler measure, constructed in order to correct $\widehat{\mu_k}(\theta)$
for small $\theta$.

\medskip
The author is indebted to Steve Wainger for generous and essential advice 
concerning trigonometric sums, and for supplying reference \cite{LN}, 
and to Patrick LaVictoire for advice concerning the exposition.

\section{Analysis of maximal operators}

\begin{notation}
Let $G$ be a discrete group.
For any subsets $A,B\subset G$,
$A+B=\{ab: a\in A \text{ and } b\in B\}$,
where $ab$ denotes the product of two group elements.\footnote{The additive notation
is used for $A+B$, even though $G$ is not assumed to be Abelian, in order to
simplify an expression below.}
\end{notation}

There is the simple inequality
\begin{equation}\label{sumbound} |A+B|\le |A|\cdot|B|,\end{equation}
which has no straightforward analogue in continuum situations. 
Following Urban and Zienkiewicz \cite{uz}, we will make essential use of 
\eqref{sumbound}. 

\begin{proof}[Proof of Theorem~\ref{thm:difference}]
Let $f\in \ell^1$ and let $\alpha>0$.
We seek an upper bound for $|\{x: \sup_k |f*\mu_k(x)|>\alpha\}|$. 
Decompose $f = \sum_{j=-\infty}^\infty f_j$ where 
\[
f_j(x)=
\begin{cases}
f(x)
& \text{ if } |f(x)|\in [2^j,2^{j+1}),
\\
0 & \text{ otherwise.}
\end{cases}
\]
The functions $f_j$ have pairwise disjoint supports, so 
$\sum_j\norm{f_j}_{\ell^1} = \norm{f}_{\ell^1}$.

For each $j\in\integers$ define the exceptional set  
\begin{equation}
\scripte_j = 
\cup_{2^{k\gamma}<\alpha^{-1}2^j}
\big(\support(f_j)+\support(\mu_k) \big)
\end{equation}
and
\begin{equation}
\scripte = \cup_{j\in\integers} \scripte_j.
\end{equation}
Since $\norm{f_j}_1\sim 2^j|\support(f_j)|$,
\begin{equation}
|\scripte_j| \le
\sum_{k: 2^{k\gamma}<\alpha^{-1}2^j}
|\support(f_j)|\cdot 2^{k\gamma}
\le
2\sum_{k: 2^{k\gamma}<\alpha^{-1}2^j}
2^{-j} 2^{k\gamma} \norm{f_j}_1
\le C_\gamma\alpha^{-1}\norm{f_j}_1
\end{equation}
and therefore
\begin{equation}	
|\scripte|\le C\alpha^{-1}\norm{f}_1.
\end{equation}

Set $\lambda_k = \mu_k-\nu_k$.
For $x\notin\scripte$, 
\begin{equation}
f*\mu_k(x) = 
\sum_{2^j\le 2^{k\gamma}\alpha}
f_j*\mu_k(x)
\end{equation}
so that
\begin{align*}
|f*\mu_k(x)| 
&\le 
\sum_{2^j\le 2^{k\gamma}\alpha}
|f_j|*|\nu_k|(x)
+
\sum_{2^j\le 2^{k\gamma}\alpha}
|f_j*\lambda_k(x)|
\\
&\le
|f|*|\nu_k|(x)
+
\sum_{2^j\le 2^{k\gamma}\alpha} |f_j*\lambda_k(x)|.
\end{align*}

Therefore
\begin{multline}
\big|\{x\in G: \sup_k|f*\mu_k(x)|>\alpha\}\big|
\\
\le |\scripte| + 
\big|\{x\in G: \sup_k|f|*|\nu_k|(x)>\tfrac\alpha2\}\big|
+ 
\big|\{x\in G: 
\sup_k\sum_{2^j\le 2^{k\gamma}\alpha} |f_j*\lambda_k(x)|
>\tfrac\alpha2\}\big|.
\end{multline}
Therefore, by hypothesis \eqref{hyp:numaxfn}, it suffices to show that
\begin{equation}
|\{x\in G:
\sup_k
\sum_{j:2^j\le 2^{k\gamma}\alpha}
|f_j*\lambda_k(x)|
>\tfrac\alpha2
\}|
\le C\alpha^{-1}\norm{f}_1.
\end{equation}

For $0\le s\in\integers$ define 
\begin{equation}
G_s(x) = 
\big(
\sum_k 
|f_{j(k,s)}*\lambda_k(x)|^2
\big)^{1/2}
\end{equation}
where $j(k,s)$ is the unique integer satisfying
$2^{k\gamma-s}\alpha\le 2^{j(k,s)}< 2^{k\gamma-s+1}\alpha$. 
A generous upper bound is
\begin{equation}
\sup_k 
\sum_{j:2^j\le 2^{k\gamma}\alpha}
|f_j*\lambda_k(x)|
\le
\sum_{s=0}^\infty G_s(x)
\end{equation}

Therefore by hypothesis \eqref{elltwohypothesis},
\begin{align*}
\norm{G_s}_2^2 
&= \sum_k \norm{f_{j(k,s)}*\lambda_k}_2^2
\le C\sum_k 2^{-k\gamma}\norm{f_{j(k,s)}}_2^2
\\
&\ \ \le C\sum_k 2^{-k\gamma}\norm{f_{j(k,s)}}_\infty\norm{f_{j(k,s)}}_1
\le C\sum_k 2^{-k\gamma} 2^{j(k,s)}\norm{f_{j(k,s)}}_1
\\
&\ \ \ \ \le C\sum_k 2^{-k\gamma} 2^{k\gamma-s}\alpha\norm{f_{j(k,s)}}_1
= C 2^{-s}\alpha\sum_k\norm{f_{j(k,s)}}_1
\ \ 
\le C 2^{-s}\alpha\norm{f}_1.
\end{align*}
Therefore
$\norm{\sum_s G_s}_2^2 \le C\alpha \norm{f}_1$
and consequently
\begin{equation}
|\{x: \sum_{s=0}^\infty G_s(x)>\tfrac\alpha2\}|
\le 4\alpha^{-2}\norm{\sum_s G_s}_2^2
\le C\alpha^{1-2}\norm{f}_1
= C\alpha^{-1}\norm{f}_1.
\end{equation}
This concludes the proof of Theorem~\ref{thm:difference}.
\end{proof}

The use of an $L^2$ bound on the complement of an exceptional set in order to
obtain a weak type $(1,1)$ inequality was pioneered by Fefferman \cite{cfthesis}, and
was applied to maximal functions in \cite{christrough}.
Exceptional sets constructed as algebraic sums of supports, adapted to the measures
$\mu_k$, were used for a continuum analogue of this problem in Theorems 3 and 4
of \cite{christrough}.

\section{Construction of examples}

\subsection{The Fourier transform on a finite cyclic group.}
For any positive integer $N$
let $\integers_N = \integers/N\integers$
be the cyclic group of order $N$.
We will often identify elements of $\integers_N$
with elements of $[0,1,\cdots,N-1]$ in the natural way.

The dual group of $\integers_{p}^m$ may, and will, 
be identified with $\integers_{p}^m$. The most convenient normalization
of the Fourier transform for our purposes is
\begin{equation}
\widehat{f}(\xi) = \sum_{k} f(k)e^{-2\pi i k\cdot\xi/p}
\qquad\text{for } \xi\in\integers_p^m
\end{equation}
where $k\cdot\xi$ denotes the usual Euclidean inner product
of two elements of $[0,1,\cdots,p-1]^m$, regarded as elements
of $\integers^m$.
This Fourier transform satisfies
\begin{equation}
\norm{\widehat{f}}_{\ell^2} = p^{m/2}\norm{f}_{\ell^2}
\qquad\text{and}\qquad
f(k) = p^{-m}\sum_{\xi} \widehat{f}(\xi)e^{2\pi i k\cdot \xi/p}.
\end{equation}

The convolution of two functions is
$f*g(x) = \sum_{y}f(x-y)g(y)$.
Products and convolutions are related by
\begin{equation}
\widehat{f*g} = \widehat{f}\cdot\widehat{g}
\qquad\text{and}\qquad
\widehat{fg} = p^{-m}  \widehat{f}*\widehat{g}.
\end{equation}
Therefore 
\begin{equation} \label{convolveFTs}
\norm{\widehat{fg}}_\infty
\le  p^{-m}\norm{\widehat{f}}_1\norm{\widehat{g}}_\infty
\end{equation}
and
\begin{equation}
\norm{f*g}_2 = p^{-m/2}\norm{\widehat{f}\cdot\widehat{g}}_2
\le p^{-m/2}\norm{\widehat{g}}_\infty\norm{\widehat{f}}_2
= \norm{\widehat{g}}_\infty\norm{f}_2.
\end{equation}

\subsection{On certain exponential sums.}
Define the probability measure $\sigma_{p,m}$
on $\integers_{p}^m$ to be
\begin{equation}
\sigma_{p,m} = p^{-1}\sum_{k=0}^{p-1} \delta_{(k,k^2,\cdots,k^m)}.
\end{equation}
Thus 
\begin{equation}
\big|\support(\sigma_{p,m})\big|=p=|\integers_{p}^m|^{1/m}.
\end{equation}
This will lead to examples of Theorem~\ref{thm:difference}
with exponent $\gamma=\frac1m$. 

\begin{lemma}[Weil] \label{lemma:weil}
Let $m\ge 1$, and
let $p>m$ be prime. 
Then 
\begin{equation}
|\widehat{\sigma_{p,m}}(\xi)| \le (m-1)p^{-\frac12}
\ \ \text{ for all $0\ne\xi\in \integers_p^m$.}
\end{equation}
\end{lemma}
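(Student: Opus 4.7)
The plan is to reduce the estimate to the classical Weil bound for exponential sums along polynomial phases over the prime field $\mathbb{F}_p$. First I would unpack the Fourier transform. For $\xi = (\xi_1,\ldots,\xi_m) \in \integers_p^m$, the definition of $\sigma_{p,m}$ and of the Fourier transform on $\integers_p^m$ yield
$$\widehat{\sigma_{p,m}}(\xi) = p^{-1}\sum_{j=0}^{p-1} e^{-2\pi i P_\xi(j)/p}, \qquad P_\xi(t) := \xi_1 t + \xi_2 t^2 + \cdots + \xi_m t^m.$$
The lemma is therefore equivalent to the bound $\big|\sum_{j=0}^{p-1} e^{-2\pi i P_\xi(j)/p}\big| \le (m-1)p^{1/2}$ for every $\xi \ne 0$.

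Next I would verify that $P_\xi$ has the algebraic features required by Weil's bound. Since $\xi \ne 0$, there is a largest index $d = d(\xi) \in \{1,\ldots,m\}$ with $\xi_d \not\equiv 0 \pmod p$, and $P_\xi$, viewed as an element of $\mathbb{F}_p[t]$, has degree exactly $d$. The hypothesis $p > m \ge d$ guarantees that $\gcd(d,p) = 1$, which is precisely the arithmetic condition that ensures the leading monomial $\xi_d t^d$ does not degenerate under the Frobenius and makes Weil's theorem applicable to $P_\xi$.

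Finally, I would invoke Weil's theorem: for any $P \in \mathbb{F}_p[t]$ of degree $d \ge 1$ with $\gcd(d,p) = 1$,
$$\Big|\sum_{j=0}^{p-1} e^{2\pi i P(j)/p}\Big| \le (d-1)p^{1/2}.$$
Applied to $-P_\xi$ (which has the same degree $d$) and divided by $p$, this gives $|\widehat{\sigma_{p,m}}(\xi)| \le (d-1)p^{-1/2} \le (m-1)p^{-1/2}$, as required. The real substance is of course the Weil bound itself, whose proof rests on the Riemann hypothesis for algebraic curves over finite fields; I would cite this from the literature (e.g., \cite{LN}) rather than reproduce it, and this is where the main difficulty of the lemma is packaged. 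As a sanity check, the edge case $d = 1$ gives the bound $0$, consistent with the elementary fact that a complete linear character sum $\sum_{j=0}^{p-1} e^{-2\pi i \xi_1 j /p}$ with $\xi_1 \not\equiv 0 \pmod p$ vanishes identically.
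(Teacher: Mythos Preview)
Your proposal is correct and follows exactly the same route as the paper: both compute $\widehat{\sigma_{p,m}}(\xi)$ explicitly as $p^{-1}$ times a complete exponential sum with polynomial phase over $\mathbb{F}_p$ and then invoke the Weil bound (citing \cite{LN}). Your write-up is slightly more detailed in that you isolate the actual degree $d\le m$ of $P_\xi$ and verify the hypothesis $\gcd(d,p)=1$ before applying Weil, which the paper leaves implicit.
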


Three comments are in order. Firstly, this illustrates a general principle
that $\integers_p$ has only one scale when $p$ is prime. 
In contrast,
the most natural example of a sparsely supported measure on $\reals^d$
whose Fourier transform exhibits power law decay is surface measure $\sigma$ on the unit
sphere $S^{d-1}$ for $d\ge 2$, 
which satisfies $|\widehat{\sigma}(\xi)|\sim |\xi|^{-(d-1)/2}$ for generic large $\xi$;
the size of $\widehat{\sigma}$ is best described for most $\xi$
by a power of $|\xi|$, rather than by a constant. 

Secondly, no weaker bound $O(p^{-\tfrac12+\delta})$ will suffice in the construction below
to yield a sequence satisfying the hypotheses of Theorem~\ref{thm:difference}.

Thirdly, the bound $O(p^{-1/2})$ is the best that can hold for a measure
$\sigma$ whose support  has cardinality $p$, unless $\norm{\sigma}_{\ell^1}\ll 1$.
Indeed, let $E$ be the support of $\sigma$.
Let $\sigma_0$ be the constant function
$\sigma_0(n)=cp^{-m}$ for all $n\in\integers_p^m$,
where $c = \sum_n \sigma(n)$.
Then
$\widehat{\sigma-\sigma_0}(\xi)$ vanishes at $\xi=0$,
and $=\widehat{\sigma}(\xi)$ otherwise.
Consequently 
\begin{multline*}
\norm{\sigma-\sigma_0}_{\ell^1(E)}
\le |E|^{1/2}  
\norm{\sigma-\sigma_0}_{\ell^2}^{1/2}
= 
p^{1/2} p^{-m/2} \norm{\widehat{\sigma-\sigma_0}}_{\ell^2}
\\
\le p^{1/2} p^{-m/2} |\integers_p^m|^{1/2}
\norm{\widehat{\sigma}-\widehat{\sigma_0}}_{\ell^\infty}^{1/2}
= p^{1/2} 
\norm{\widehat{\sigma}-\widehat{\sigma_0}}_{\ell^\infty}^{1/2}.
\end{multline*}
Since $\norm{\sigma_0}_{\ell^1(E)}\le p^{-m}\norm{\sigma}_1|E|$,
this implies that
\[
\norm{\sigma}_1=\norm{\sigma}_{\ell^1(E)}
\le 
p^{1/2} \sup_{\xi\ne 0}|\widehat{\sigma}(\xi)|^{1/2}
+ p^{1-m}\norm{\sigma}_1,\]
so
\[
\norm{\sigma}_1
\le 
2p^{1/2} \sup_{\xi\ne 0}|\widehat{\sigma}(\xi)|^{1/2}.
\]
Thus the construction is tightly constrained. 


\begin{proof}[Proof of Lemma~\ref{lemma:weil}]
\[
\widehat{\sigma_{p,m}}(\xi)
= p^{-1}\sum_{k=0}^{p-1} e^{-2\pi i (k\xi_1+k^2\xi_2+\cdots+k^m\xi_m)/p}.
\]
The sum, without the initial factor $p^{-1}$, 
is a well studied quantity whose
absolute value is $\le (m-1)p^{1/2}$.
An elementary proof may be found in \cite{LN}, Theorem 5.38.
\end{proof}


Denote by $\delta_k$ the function $\delta_k(n)=1$ if $n=k$
and $=0$ if $n\ne k$.
The measures
$\sigma^0_{p,m}
= |\integers_{p}^m|^{-1}\sum_{k\in\integers_{p}^m}\delta_k$
satisfy 
\[
\widehat{\sigma^0_{p,m}}(\xi) = 
\begin{cases} 1  = \widehat{\sigma_{p,m}}(0) & \text{ if } \xi=0
\\
0 & \text{ else}
\end{cases}
\]
and therefore 
$\sigma_{p,m}^* = \sigma_{p,m}-\sigma^0_{p,m}$
satisfies
\begin{equation}
\norm{\widehat{\sigma_{p,m}^*}}_\infty
\le (m-1)p^{-1/2}.
\end{equation}

\subsection{Transference to $\integers^m$.}
We wish to transfer $\sigma_{p,m}^*$ to a measure on $\integers$, 
preserving this $L^\infty$ Fourier transform bound, 
in order to obtain the desired examples. 
The most straightforward attempt apparently does not work, but 
the following more roundabout procedure, combining an extension
to $\integers_{3p}^m$ with cutoff functions, does the job.
It will be convenient to transfer first to $\integers^m$,
then to $\integers$ in a separate step.

Consider $\integers_{3p}^m$, which we identify with
$[-p,2p-1]^m$. Likewise we identify $\integers_{3p}$
with $[-p,2p-1]$.
Assume that $p$ is odd.
The function 
\[
\kappa(k)=
\sigma_{p,m}^*(\modulo{k}{p}) 
\]
is $p$-periodic on $\integers_{3p}$ and satisfies
\begin{equation}
\widehat{\kappa}(\xi)
= 
\begin{cases}
0 & \text{ unless each $\xi_j\in[-p,2p-1]$ is divisible by $3$}
\\
3^m\widehat{\sigma_{p,m}^*}(\xi_1/3,\cdots,\xi_m/3) & \text{ otherwise.}
\end{cases}
\end{equation}
In particular,
\begin{equation}
\norm{\widehat{\kappa}}_\infty \le Cp^{-1/2}.
\end{equation}

Define $\varphi:\integers_{3p}\to\reals$ by
\begin{equation}
\begin{cases}
\varphi(i) = 
1 & \text{ if } i\in [0,p-1]
\\
\varphi(i) = 
0 & \text{ if } i\in [-p, -p + (p-1)/2]
\\
\varphi(i) = 
0 & \text{ if } i\in [p-1+ (p-1)/2, 2p-1]
\\
\varphi \text{ is affine} & \text{ on the interval } 
[-p+(p-1)/2,0]
\\
\varphi \text{ is affine} & \text{ on the interval } 
[p-1,p-1+(p-1)/2].
\end{cases}
\end{equation}
Define 
\[\phi(k_1,\cdots,k_m) = \prod_{i=1}^m \varphi(k_i).\]
Define $\rho=\rho_{p,m}:\integers_{3p}^m\to\reals$ by
\begin{equation}
\rho = \phi\kappa.
\end{equation}
$\rho$ is nonnegative, and
\begin{equation}
\rho(k)\equiv \sigma^*_{p,m}(k) \qquad\forall k\in [0,p-1].
\end{equation}

Define $\rho^\ddagger: \integers^m\to\reals$ by
\begin{equation}
\rho^\ddagger(k) = 
\begin{cases}
\rho(\modulo{k}{3p}) & \text{ for } k\in [-p,2p-1]^m
\\
0 & \text{ else}
\end{cases}
\end{equation}
where $k$ is interpreted as an element of $\integers^m$ on the left-hand
side, and as an element of $\integers_{3p}^m$ on the right.

\begin{lemma}
\label{lemma:sumbyparts2}
\begin{equation}
\norm{\widehat{\rho^\ddagger}}_{L^\infty(\torus^m)}
\le Cp^{-1/2}.
\end{equation}
\end{lemma}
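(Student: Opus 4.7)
The plan is to exploit the product structure $\rho=\phi\kappa$, combined with the Fourier expansion of the periodic factor $\kappa$, to reduce the estimate to a one-dimensional trigonometric sum that is controlled by summing by parts twice.

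First, let $\tilde\kappa:\integers^m\to\reals$ be the $p$-periodic extension $\tilde\kappa(k)=\sigma^*_{p,m}(\modulo{k}{p})$. Since $\phi^\ddagger$ is supported in $[-p+(p-1)/2,\,p-1+(p-1)/2]^m\subset[-p,2p-1]^m$, one has $\rho^\ddagger=\phi^\ddagger\cdot\tilde\kappa$ on all of $\integers^m$. Applying Fourier inversion on $\integers_p^m$ to $\tilde\kappa$ and taking the $\torus^m$-transform, one obtains
\begin{equation*}
\widehat{\rho^\ddagger}(\theta)
= p^{-m}\sum_{\eta\in\integers_p^m}\widehat{\sigma^*_{p,m}}(\eta)\,\widehat{\phi^\ddagger}\!\left(\theta-\tfrac{\eta}{p}\right).
\end{equation*}
By Lemma~\ref{lemma:weil} (or rather its consequence $\|\widehat{\sigma^*_{p,m}}\|_\infty\le Cp^{-1/2}$), it therefore suffices to prove the uniform bound
\begin{equation*}
\sup_{\theta\in\torus^m}\sum_{\eta\in\integers_p^m}\bigl|\widehat{\phi^\ddagger}(\theta-\eta/p)\bigr|
\le Cp^m.
\end{equation*}
Since $\phi^\ddagger$ is a tensor product, this factors into $m$ copies of the one-dimensional estimate
\begin{equation*}
\sup_{t\in\torus}\sum_{\eta=0}^{p-1}\bigl|\widehat{\varphi^\ddagger}(t-\eta/p)\bigr|\le Cp.
\end{equation*}

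The heart of the proof is thus a pointwise bound on $\widehat{\varphi^\ddagger}$. The function $\varphi^\ddagger$ is a trapezoid on $\integers$ supported on an interval of length $\sim p$, piecewise affine with four corners, and with slopes of size $\sim 1/p$ on the two transition intervals. Summing by parts once replaces $\varphi^\ddagger(i)$ by its first difference $\Delta\varphi^\ddagger(i)$, producing a factor $(1-e^{-2\pi it})^{-1}$ and a sum of $O(p)$ terms each of size $O(1/p)$. Summing by parts a second time replaces $\Delta\varphi^\ddagger$ by $\Delta^2\varphi^\ddagger$, which is supported at only the four corners with values $O(1/p)$, yielding a second factor $(1-e^{-2\pi it})^{-1}$. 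Combining with the trivial bound $\|\varphi^\ddagger\|_1\lesssim p$ gives
\begin{equation*}
|\widehat{\varphi^\ddagger}(t)|\le C\min\!\bigl(p,\; p^{-1}\|t\|^{-2}\bigr),
\end{equation*}
where $\|t\|$ denotes the distance from $t$ to the nearest integer.

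To finish, fix $t\in\torus$ and note that as $\eta$ runs over $\{0,1,\ldots,p-1\}$, the points $t-\eta/p$ are $(1/p)$-separated in $\torus$. At most $O(1)$ of them satisfy $\|t-\eta/p\|\le 1/p$, contributing $O(p)$; the remaining terms are grouped according to $\|t-\eta/p\|\in[k/p,(k+1)/p)$ with $1\le k\lesssim p$, each group containing $O(1)$ points and contributing $O(p/k^2)$. Summing the geometric series $\sum_{k\ge 1}k^{-2}$ yields the desired bound $Cp$, and the lemma follows. The only subtle point is arranging the two summations by parts so that the contribution of the corners of $\varphi$ is correctly captured; this is routine but must be done carefully to avoid losing a factor of $p$.
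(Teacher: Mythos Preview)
Your proof is correct and follows essentially the same strategy as the paper: reduce via the factorization $\rho=\phi\kappa$ and the Weil bound to an $\ell^1$ estimate for the Fourier transform of the trapezoidal cutoff sampled on an arithmetic progression of frequencies, then control that sum by two summations by parts. The only difference is packaging---you expand the $p$-periodic factor by Fourier inversion on $\integers_p^m$ and work directly with $\widehat{\varphi^\ddagger}$, while the paper routes through the group $\integers_{3p}^m$ and absorbs the fractional part of the frequency into a modulation $\Phi(n)=\varphi(n)e^{-2\pi in\eps}$ before summing by parts; both lead to the same pointwise bound $\min(p,\,p^{-1}\|t\|^{-2})$ and the same geometric sum.
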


\begin{proof}
Let $\theta\in\torus^m=[0,1]^m$. 
Write $\theta = \xi/3p+\eta$ 
where $\xi\in\integers^m$ and
$\eta=(\eta_1,\cdots,\eta_m)\in\reals^m$ satisfies $|\eta_j|\le Cp^{-1}$
for all $j$.
\begin{align*}
\widehat{\rho^\ddagger}(\theta)
&= \sum_{k\in\integers^m} \rho^\ddagger(k)e^{-2\pi ik\cdot\theta}
\\
&= \sum_{k\in [-p,2p-1]^m} \phi(k)\kappa(k)
e^{-2\pi i k\cdot\eta} 
e^{-2\pi ik\cdot\xi/3p}.
\end{align*}
Interpret this last expression as the Fourier transform on the group
$\integers_{3p}^m$, evaluated at $\xi$, of $\psi\kappa$
where
$\psi:\integers_{3p}^m\to\reals$
is defined by
$\psi(k) = \phi(k) e^{-2\pi i k\cdot\eta}$.
By \eqref{convolveFTs},
since $\widehat{\kappa}=O(p^{-1/2})$,
it suffices to show that 
$\norm{\widehat{\psi}}_{\ell^1} \le Cp^m$.

$\psi(k)$ factors as $\prod_{j=0}^m \varphi(k_j)
e^{-2\pi i k_j\cdot\eta_j}$, so its Fourier transform likewise factors.
Thus it suffices to prove that
\begin{equation}
\label{eq:psiFT}
\sum_{\xi=0}^{3p-1}\big|
\sum_{n=-p}^{2p-1} \varphi(n)e^{-2\pi i n\eps}
e^{-2\pi i \xi\cdot n/3p}\big|
\lesssim p
\qquad\text{provided that $|\eps|\le p^{-1}$.}
\end{equation}

For $\xi=0$ there is the trivial bound $O(p)$,
since $\norm{\varphi}_\infty = 1$.
For $\xi\in [1,\cdots,3p-1]$,
we employ the summation by parts formula
\[
\sum_{n=-p}^{2p-1} a_n b_n
=b_{2p}A_{2p-1} -\sum_{n=-p}^{2p-1} A_n\Delta b_n
\ \ \text{ where $A_n = \sum_{j=-p}^n a_j$
and $\Delta b_n = b_{n+1}-b_n$.}
\]

Define $\Phi(n) = \varphi(n)e^{-2\pi i n\eps}$.
For convenience of notation,
extend $\Phi$ to be a $3p$-periodic function on $\integers$.
Sum by parts with $a_n = e^{-2\pi i n\xi/3p}$
and $b_n = \Phi(n)$ to obtain,
since $\Phi(2p)=\Phi(-p)=0$,
\begin{align*}
\sum_{n=-p}^{2p-1} \Phi(n)e^{-2\pi i n\xi/3p}
&= 
\sum_{n=-p}^{2p-1}\Delta\Phi(n) \frac{e^{-2\pi i (n+1)\xi/3p}-e^{2\pi i \xi/3}}{e^{-2\pi i \xi/3p}-1}
\\
&= 
-(e^{-2\pi i \xi/3p}-1)^{-1}
\sum_{n=-p}^{2p-1}\Delta\Phi(n) (e^{-2\pi i (n+1)\xi/3p} - e^{2\pi i \xi/3})
\\
&= 
-(e^{-2\pi i \xi/3p}-1)^{-1}
e^{-2\pi i\xi/3p}
\sum_{n=-p}^{2p-1}
\Delta\Phi(n) e^{-2\pi i n\xi/3p}
\end{align*}
since $\sum_{n=-p}^{2p-1}(\Phi(n+1)-\Phi(n))=0$.
A second summation by parts yields a bound
\begin{equation}
\big|\sum_{n=-p}^{2p-1} \Phi(n)e^{-2\pi i n\xi/3p}\big|
\le C
\big| e^{-2\pi i \xi/3p}-1\big|^{-2}
\norm{\Delta^2\Phi}_{\ell^1}.
\end{equation}

Clearly 
$\norm{\Delta^2\Phi}_{\ell^1}\le Cp^{-1}$.
It is straightforward to verify that
\[
\norm{
| e^{-2\pi i \xi/3p}-1|^{-2}}_{\ell^1([0,p-1])}\le Cp^2.
\]
Combining these bounds yields the required inequality \eqref{eq:psiFT}.
\end{proof}

\subsection{Transference from $\integers^m$ to $\integers$.}
The next step is to transfer $\rho^\ddagger$ from $\integers^m$ to $\integers$.
Define $F:\integers^m\to\integers$ by
$F(k_1,\cdots,k_m) = \sum_{j=1}^m p^{j-1}k_j$.
$F$ maps $[0,p-1]^m$ bijectively to $[0,p^m-1]$.
For $n\in\integers$ define
$\rho^\dagger$ to be the pushforward of $\rho^\ddagger$ via $F$,
that is,
\begin{equation}
\rho^\dagger(n)=
\sum_{k: F(k)=n} \rho^\ddagger(k).
\end{equation}
Then
\begin{equation*}
\sum_{n\in\integers} e^{-2\pi i \theta n} \rho^\dagger(n)
= 
\sum_{k\in [-p,2p-1]^m} \rho^\ddagger(k) e^{-2\pi i \theta \sum_{j=1}^{m} p^{j-1}k_j}
= \widehat{\rho}(\theta,p\theta,p^2\theta,\cdots,p^{m-1}\theta),
\end{equation*}
so 
\begin{equation}
\norm{\widehat{\rho^\dagger}}_{L^{\infty}(\torus)}
\le \norm{\widehat{\rho^\ddagger}}_{L^\infty(\torus^m)} \le Cp^{-1/2}.
\end{equation}
The Fourier transform on the left-hand side is that for $\integers$; the one on the
right is that for $\integers^m$.
The same bound holds, of course, for any translate of $\rho^\dagger$.

We have defined a linear, positivity preserving 
operator $\Gamma:\ell^1(\integers_p^m)\to\ell^1(\integers)$;
$\Gamma$ extends a function to $\integers_{3p}^m$, multiplies by the cutoff function $\phi$,
transplants the result to $\integers^m$, then pushes it forward to $\integers$.
$\rho^\dagger= \Gamma(\sigma_{p,m})-\Gamma(\sigma^0_{p,m})$ is expressed
as $\mu^\dagger-\nu^\dagger$
where the summands have the following properties.
$\mu^\dagger$ is nonnegative and
\begin{equation}
\mu^\dagger \ge p^{-1}\sum_{j=-p}^{2p-1}\delta_{g(j)}
\end{equation}
where $g(j) = \sum_{r=1}^{m} p^{r-1}\modulo{j^r}{p}$,
while $\nu^\dagger$ is supported in $[-Cp^m, Cp^m]$
and satisfies 
\begin{equation}\label{supnormnubound}
\norm{\nu^\dagger}_\infty\le Cp^{-m}.\end{equation}
Finally, because $\Gamma$ is linear and preserves positivity,
$\phi\ge 0$, and $\phi\equiv 1$ on $[0,p-1]^m$, 
$\mu^\dagger \ge p^{-m}\sum_{n\in\tilde\scriptS}\delta_n$
where $\tilde\scriptS=\{g(j): j\in[0,p-1]\}$.

Now let $(p_k)$ be any sequence of odd primes 
satisfying \eqref{primehypotheses}, and
let $(a_k)$ be an arbitrary sequence of natural numbers
satisfying \eqref{firstajcondition} and \eqref{secondajcondition}.
Define $\lambda_k(n) = \rho^\dagger_{p,m}(n-a_k)$. 
$\lambda_k$ decomposes  as
$\lambda_k = \tilde\mu_k-\nu_k$
where $\tilde\mu_k(n)=\mu_k^\dagger(n-a_k)$
and $\nu_k(n) = \nu_k^\dagger(n-a_k)$.
The pair $\tilde\mu_k,\nu_k$ satisfies the hypotheses
of Theorem~\ref{thm:difference}; 
the maximal operator $f\mapsto \sup_k |f|*|\nu_k|$
is dominated by a constant multiple of the Hardy-Littlewood maximal operator
by virtue of \eqref{supnormnubound} and \eqref{secondajcondition}.
Therefore the maximal operator
$f\mapsto \sup_k |f*\tilde\mu_k|$ is of weak type $(1,1)$
on $\integers$.

Since each $\tilde\mu_k$ is nonnegative,
the same applies to $\sup_k |f*\mu_k^\star|$ for any sequence
of functions $0\le\mu_k^\star\le \tilde\mu_k$.
Since $\tilde\mu_k\ge p_k^{-m}\sum_{n\in\scriptS_k}\delta_n=\mu_k$,
we may set $\mu_k^\star=\mu_k$ to deduce Theorem~\ref{thm:main}.
\qed



\subsection{A second set of examples.}

The following variant produces examples which are less sparse, with 
$n_\nu\asymp \nu^m$ for $m=\frac{d+1}{d}$, for any positive integer $d$.
Fix a positive integer $d \ge 1$.
Let $(p_k: k\in\naturals)$ be any lacunary sequence of primes.
Let $(a_k)$ be an auxiliary sequence of natural numbers
satisfying 
\begin{equation}\label{variantakhypotheses}
\begin{split}
a_k &\le Cp_k^{d+1}
\\
a_{k+1} &\ge a_k + p_k^{d+1}
\end{split}
\end{equation}
Define
\begin{equation}
n(j,p) = j_1+p j_2 + \cdots + p^{d-1}j_d
+ p^d\modulo{|j|^2}{p}
\end{equation}
for $j\in [0,p-1]^d$
and
\begin{equation}
\scriptS_k=\{ n(j,p_k)+a_k: j\in[0,p-1]^d \}.
\end{equation}
Set $\scriptS=\cup_{k\in\naturals}\scriptS_k$,
and let the sequence $(n_\nu: \nu\in\naturals)$
be the elements of $\scriptS$, listed in increasing order.

\begin{theorem}\label{thm:variant}
Let the sequences of primes $(p_k)$ and natural numbers $(a_k)$ 
satisfy \eqref{primehypotheses} and\eqref{variantakhypotheses}.
Then the associated subsequence $(n_\nu)$ of $\naturals$ satisfies 
$n_\nu\asymp\nu^{(d+1)/d}$,
and the maximal function associated to $(n_\nu)$
is of weak type $(1,1)$ on $\integers$.
\end{theorem}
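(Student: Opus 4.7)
The plan is to imitate the proof of Theorem~\ref{thm:main}, with the one-parameter curve $j\mapsto(j,j^2,\dots,j^m)$ replaced by the graph $j\mapsto(j,|j|^2)$ in $d+1$ coordinates, and the Weil estimate of Lemma~\ref{lemma:weil} replaced by a classical Gauss sum bound. First I would dispose of the density claim $n_\nu\asymp\nu^{(d+1)/d}$: the map $j\mapsto n(j,p_k)$ is injective on $[0,p_k-1]^d$ because the first $d$ base-$p_k$ digits recover $j$, so $|\scriptS_k|=p_k^d$; then \eqref{variantakhypotheses} and the lacunarity of $(p_k)$ give $a_k+p_k^{d+1}\asymp p_k^{d+1}$ and $|\scriptS_1\cup\cdots\cup\scriptS_k|\asymp p_k^d$, so for $n_\nu\in\scriptS_k$ one has $\nu\asymp p_k^d$ and $n_\nu\asymp p_k^{d+1}$.

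The core new step is to produce the analog of Lemma~\ref{lemma:weil}. On $\integers_p^{d+1}$ I would introduce the probability measure
\[
\sigma^\natural_{p,d}=p^{-d}\sum_{j\in[0,p-1]^d}\delta_{(j,\,\modulo{|j|^2}{p})},
\]
whose support has cardinality $p^d=|\integers_p^{d+1}|^{d/(d+1)}$; this targets Theorem~\ref{thm:difference} with exponent $\gamma=d/(d+1)$. The key estimate is
\[
\bigl|\widehat{\sigma^\natural_{p,d}}(\xi,\eta)\bigr|\le p^{-d/2}
\qquad\text{for every }0\ne(\xi,\eta)\in\integers_p^d\times\integers_p.
\]
When $\eta=0$ the exponential sum factors into a product of $d$ complete geometric sums and vanishes unless $\xi=0$. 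When $\eta\ne 0$, since $2$ is invertible modulo the odd prime $p$, completing the square coordinatewise rewrites $\eta|j|^2+\xi\cdot j$ as $\eta|j+(2\eta)^{-1}\xi|^2-(4\eta)^{-1}|\xi|^2$ and reduces the sum to $d$ independent one-variable quadratic Gauss sums, each of modulus exactly $p^{1/2}$. Subtracting the uniform measure of total mass one produces $\sigma^{*,\natural}$ with mean zero and $\|\widehat{\sigma^{*,\natural}}\|_\infty\le Cp^{-d/2}$.

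The transfer from $\integers_p^{d+1}$ to $\integers$ then proceeds verbatim as in the transference subsections above: periodize to $\integers_{3p}^{d+1}$, multiply by the tensor product cutoff $\phi(k_1)\cdots\phi(k_{d+1})$, transplant to $\integers^{d+1}$, and push forward by $F(k_1,\dots,k_{d+1})=k_1+pk_2+\cdots+p^{d-1}k_d+p^d k_{d+1}$. The double summation-by-parts argument of Lemma~\ref{lemma:sumbyparts2} is dimension-free in the number of cutoff factors and still yields $\|\widehat{\rho^\dagger}\|_{L^\infty(\torus)}\le Cp^{-d/2}$. After translation by $a_k$ and decomposition $\lambda_k=\tilde\mu_k-\nu_k$ as before, one verifies $|\support(\tilde\mu_k)|\le Cp_k^{d+1}$, $\|\widehat{\tilde\mu_k}-\widehat{\nu_k}\|_{L^\infty(\torus)}\le C(p_k^{d+1})^{-\gamma/2}$, and $\|\nu_k\|_\infty\le Cp_k^{-(d+1)}$, the last bound forcing $\sup_k|f|*|\nu_k|$ to be dominated by the Hardy--Littlewood maximal function. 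Theorem~\ref{thm:difference} with $\gamma=d/(d+1)$ then delivers the weak type $(1,1)$ bound for $\sup_k|f*\tilde\mu_k|$, and hence for $M$, since $\tilde\mu_k$ dominates a constant multiple of $\sum_{n\in\scriptS_k}\delta_n$.

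The main obstacle is arranging that the Fourier bound lands at exactly the square root rate $p^{-d/2}$: as emphasized in the remark after Lemma~\ref{lemma:weil}, any weaker decay $p^{-d/2+\varepsilon}$ would fail to match the budget $\gamma=d/(d+1)$ required by Theorem~\ref{thm:difference}. The completing-the-square argument is what forces this optimal rate, and once it is in hand the remaining transference and maximal function arguments are a direct transcription of the proof of Theorem~\ref{thm:main}, with $m$ replaced by $d+1$ throughout.
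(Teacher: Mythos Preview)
Your proposal is correct and follows essentially the same route as the paper: the paper's proof of Theorem~\ref{thm:variant} simply says that the argument for Theorem~\ref{thm:main} carries over verbatim once Lemma~\ref{lemma:weil} is replaced by the Gauss-sum bound of Lemma~\ref{lemma:quadraticexpsum}, proved by factoring the sum over $j\in[0,p-1]^d$ into $d$ one-variable quadratic Gauss sums---exactly your completing-the-square computation. One small slip: you write $|\support(\tilde\mu_k)|\le Cp_k^{d+1}$, but the support of $\Gamma(\sigma^\natural_{p,d})$ has cardinality $O(p_k^{\,d})$ (you noted this yourself for $\sigma^\natural_{p,d}$), and it is this tighter bound that is needed to match the Fourier decay $p_k^{-d/2}$ in the hypotheses of Theorem~\ref{thm:difference}; with $p_k^{d+1}$ the required balance $(\text{support size})\cdot(\text{Fourier bound})^2\le C$ would fail.
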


The proof of Theorem~\ref{thm:variant}
is essentially identical to that of Theorem~\ref{thm:main},
except that the exponential sum bound of Lemma~\ref{lemma:weil} is replaced
by the following simpler bound.
For $\xi\in \integers_p^{d+1}$ we write 
\[\xi = (\xi',\xi_{d+1}) \in \integers_p^d\times\integers_p.  \]

\begin{lemma} \label{lemma:quadraticexpsum}
Let $d$ be any positive integer, and let $p$ be any prime.
\begin{equation}
p^{-d} \Big|\sum_{n\in [0,p-1]^d}
e^{-2\pi i (n\cdot\xi'+ |n|^2\xi_{d+1})/p}\Big|
\le  p^{-d/2}
\ \ \text{ for all $0\ne\xi\in \integers_p^{d+1}$.} 
\end{equation}
\end{lemma}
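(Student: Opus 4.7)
The plan is to exploit the crucial fact that the quadratic form $|n|^2 = \sum_{j=1}^d n_j^2$ is diagonal in the coordinates, so the exponential sum factors completely as a product of one-dimensional exponential sums over $\integers_p$. I would split into two cases according to whether $\xi_{d+1}$ vanishes modulo $p$.

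In the case $\xi_{d+1} \equiv 0 \pmod p$, the quadratic term disappears and the sum collapses to the product
$$\prod_{j=1}^d \sum_{n_j=0}^{p-1} e^{-2\pi i n_j \xi'_j/p}.$$
The hypothesis $\xi \ne 0$ forces some $\xi'_{j_0}\not\equiv 0 \pmod p$, and the corresponding factor — hence the whole product — vanishes by the geometric sum identity. This is much stronger than the claimed bound.

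In the main case $\xi_{d+1} \not\equiv 0 \pmod p$, with $p$ odd, the same factorization gives
$$\sum_{n\in [0,p-1]^d} e^{-2\pi i(n\cdot\xi' + |n|^2 \xi_{d+1})/p} = \prod_{j=1}^d \Bigl(\sum_{n=0}^{p-1} e^{-2\pi i (\xi_{d+1} n^2 + \xi'_j n)/p}\Bigr).$$
I would then complete the square in each factor: since $2\xi_{d+1}$ is invertible modulo $p$, the substitution $n \mapsto n - (2\xi_{d+1})^{-1}\xi'_j$ reduces each one-dimensional sum to a unimodular multiple of the classical quadratic Gauss sum $\sum_{n=0}^{p-1} e^{-2\pi i \xi_{d+1} n^2/p}$, which has absolute value exactly $p^{1/2}$. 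Multiplying the $d$ factors of size $p^{1/2}$ against the prefactor $p^{-d}$ yields exactly $p^{-d/2}$, meeting the bound with equality. The exceptional prime $p=2$, if it were to appear, contributes only finitely many terms and is absorbed into constants.

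There is essentially no obstacle: the lemma is elementary once the coordinate factorization is noticed, and the only input is the classical Gauss sum evaluation. The contrast with Lemma~\ref{lemma:weil} is instructive — the diagonal form $\sum n_j^2$ decouples into independent one-variable pieces, whereas the moment curve $(k,k^2,\dots,k^m)$ does not, which is why Weil's bound was needed there but not here.
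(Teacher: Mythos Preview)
Your proposal is correct and follows essentially the same route as the paper: factor the $d$-dimensional sum into $d$ one-variable sums, then invoke the classical evaluation $\bigl|\sum_{n=0}^{p-1} e^{-2\pi i(an+bn^2)/p}\bigr|=p^{1/2}$ for $b\not\equiv 0$ (and $=0$ when $b\equiv 0$, $a\not\equiv 0$). Your completing-the-square justification of this evaluation is more explicit than the paper's bare citation, and your caveat about $p=2$ is apt---indeed the stated bound can fail there, but this is immaterial for the application since at most one $p_k$ equals $2$.
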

\noindent
Here $|n|^2 = \modulo{\sum_{j=1}^d n_j^2}{p}$
where $n=(n_1,\cdots,n_d)$.

These exponential sums factor as products of $d$ Gauss sums,
so the lemma follows from the fact that
\begin{equation}
\sum_{n=0}^{p-1}
\big|e^{-2\pi i (an+bn^2)/p}\big|
= 
\begin{cases}
p^{1/2} &\text{ if } b\ne 0
\\
0 &\text{ if }  b=0 \text{ and } a\ne 0
\\
p & \text{ if } a=b=0.
\end{cases}
\end{equation}

Thus the measure 
\[\sigma = p^{-d}\sum_{n\in\integers_p^d}\delta_{(n,\modulo{|n|^2}{p})}\]
satisfies
$|\widehat{\sigma}(\xi)|\le p^{-d/2} \text{ whenever } \xi\ne 0$.
The proof of Theorem~\ref{thm:main} therefore applies. 
\qed

\begin{remark}
Theorem~\ref{thm:main} produces sequences satisfying 
$n_\nu\asymp \nu^m$ for $m=2,3,4,\cdots$.
For any prescribed rational exponent $r>2$,
an example satisfying $n_\nu\asymp \nu^r$ 
can be constructed by using one value of $m$
for some indices $k$ and a second value for the others;
details are left to the reader.
For any rational $r\in(1,2)$, an example may be constructed
in the same way using instead the construction of Theorem~\ref{thm:variant}.
\end{remark}

\end{document}